%% file: main.tex
\documentclass[11pt]{article}
\usepackage[a4paper,margin=27mm]{geometry}
\usepackage[T1]{fontenc}
\usepackage{lmodern,microtype,amsmath,amssymb,amsthm,booktabs,longtable,graphicx,xurl}
\usepackage{xcolor}
\usepackage[colorlinks=true,linkcolor=blue!45!black,citecolor=blue!45!black,urlcolor=blue!45!black]{hyperref}
\hypersetup{pdftitle={Bistability with equal and independent removal in small bimolecular networks},pdfauthor={Bo Peng}}
\newtheorem{theorem}{Theorem}
\newtheorem{lemma}[theorem]{Lemma}
\newcommand{\Rp}{\mathbb R_{>0}}
\newcommand{\diag}{\operatorname{diag}}
\newcommand{\tr}{\operatorname{tr}}
\title{Bistability with equal and independent removal\\in small bimolecular networks}
\author{Bo Peng\thanks{\raggedright Institute of Mathematical Sciences, ShanghaiTech University.
Correspondence: \href{mailto:pengbo2025@shanghaitech.edu.cn}{pengbo2025@shanghaitech.edu.cn}.}}
\date{23 October 2026}
\begin{document}
\maketitle
\begin{abstract}
We classify two-species mass-action networks with at most three internal
directed reactions on $\{X,Y,2X,X+Y,2Y\}$, positive inflows, and linear removal
of both species. Among the 681 networks up to species exchange, exactly 27
admit two distinct positive hyperbolic attracting equilibria. The same 27
admit bistability with equal removal rates and with independently chosen
positive removal rates. Each is minimal under reaction deletion. We give
rational equal-removal witnesses for all positive cases and all-parameter
exclusions for the remaining 654. The proof combines nullcline geometry,
polynomial sign tests, resultant root bounds, and exact flux-cone
decompositions. A separate rational-arithmetic implementation verifies the
complete classification and its equilibrium certificates.
\end{abstract}

\section{Models and classification}
We compare the capacity for bistability under common dilution and
species-specific removal in two-species bimolecular networks with at most
three internal reactions. Parameters are chosen freely within each model.

Let $\mathcal C=\{(1,0),(0,1),(2,0),(1,1),(0,2)\}$. An internal network $R$
contains at most three distinct directed edges $\alpha_j\to\beta_j$ on
$\mathcal C$, with $\alpha_j\ne\beta_j$. Each edge has rate $k_j>0$;
a reversible pair counts as two edges. Write
\[
 H_R(z;k)=\sum_j k_j(\beta_j-\alpha_j)x^{\alpha_{j1}}y^{\alpha_{j2}},
 \qquad z=(x,y)^T.
\]
With both species supplied and removed, the normalized models are
\begin{equation}\label{eq:models}
 \mathrm E:\quad\dot z=(a,b)^T-z+H_R(z;k),\qquad
 \mathrm F:\quad\dot z=(a,b)^T-\diag(\rho,1)z+H_R(z;k),
\end{equation}
where $a,b,k_j,\rho>0$. Thus four species-flow reactions are compulsory;
all other edges have nonzero source and target in $\mathcal C$.
The target is two distinct points in $\Rp^2$ satisfying
$F=0$, $\tr DF<0$, and $\det DF>0$. Let $\mathcal B_E$ and $\mathcal B_F$
denote the corresponding sets of topologies. Then
$\mathcal B_E\subseteq\mathcal B_F$.

There are $\sum_{m=0}^3\binom{20}{m}=1351$ labelled networks.
Species exchange fixes the empty set and ten two-edge sets, giving
$(1351+11)/2=681$ representatives, with counts $1,10,100,570$ by size.
In F, exchange is followed by division of the vector field by $\rho$,
which restores the normalization with ratio $1/\rho$ and preserves stability.
Catalogue identifiers order the complexes as above, then order directed
edge sets lexicographically within each size.

\begin{theorem}\label{thm:main}
For the universe above, $\mathcal B_E=\mathcal B_F$ consists of the 27
representatives in Table~\ref{tab:mechanisms}. Every listed network admits
the rational E parameters in Table~\ref{tab:parameters}. The other 654
representatives are excluded for all positive F parameters.
\end{theorem}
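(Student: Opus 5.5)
The proof splits into a constructive half and an exclusion half, linked by the inclusion $\mathcal B_E\subseteq\mathcal B_F$ noted before the statement. For the constructive half, for each of the 27 representatives I take the rational E parameters of Table~\ref{tab:parameters}. I substitute them into $F=0$ and eliminate one variable by a resultant, obtaining a univariate polynomial with rational coefficients. I then isolate two distinct positive roots by exact Sturm or Descartes bisection on rational intervals. On each isolating box I certify $\tr DF<0$ and $\det DF>0$ by interval evaluation, or equivalently by a sign test of $\tr DF$ and $\det DF$ reduced modulo the equilibrium polynomial. This places all 27 in $\mathcal B_E$, hence in $\mathcal B_F$. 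Once every other representative is excluded for all F parameters, we get $\mathcal B_F\subseteq\{27\}\subseteq\mathcal B_E\subseteq\mathcal B_F$, and equality follows.

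For the exclusion half I would sort the 654 remaining representatives into a few structural classes and prove one all-parameter lemma per class, rather than treating each network separately.

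\textbf{Monotone nullclines.} Here the network has no self-promoting (autocatalytic) reaction. I show $\det DF>0$ at every positive equilibrium whenever the Jacobian has a sign pattern forbidding positive feedback, for instance when $DF$ is a Metzler-free or a P-matrix pattern. Then the equilibrium is unique, by an injectivity (P-matrix) argument or by degree theory on the region trapped by the inflow.

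\textbf{Generic low-degree cases.} Each equation is at most quadratic, so I solve one nullcline for $y$ as a function of $x$, or use the linear combination $x+y$ when the reactions conserve mass. This reduces equilibria to the positive roots of a univariate polynomial $p(x)$ of degree at most 4 whose coefficients are signed polynomials in $(a,b,\rho,k)$. Descartes' rule, applied with coefficient signs forced by positivity of the parameters, bounds the number of positive roots by 1. Alternatively, two positive roots force an intermediate saddle, since $\det DF$ alternates in sign along the ordered equilibria of the nullcline intersection, and three roots are excluded.

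\textbf{Delicate cases.} Here two positive roots are algebraically possible. I show that two stable equilibria would require three intersections, so that the middle one is the saddle. A resultant root bound then limits the count to two, and the sign of $\det DF$ at the two roots is forced to be opposite.

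\textbf{Flux-cone decompositions.} For networks where a coefficient sign is not determined, I write the steady-state flux vector as a nonnegative combination of extreme rays of the flux cone. I reparametrize equilibria by these convex parameters, following a Clarke/deficiency-style approach. In these coordinates $\det DF$ becomes a polynomial with all coefficients of one sign, which yields an exact exclusion valid for every positive $\rho$.

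\textbf{Main obstacle.} I expect the hardest step to be the residual networks containing both $2X\to 3X$-type autocatalysis through $X+Y$ or $2X$ edges and independent $\rho$. For these, Descartes bounds allow three positive roots and stability must be excluded jointly with existence. I would first try the flux-cone parametrization to make $\tr DF$ or $\det DF$ sign-definite. Failing that, I would compute a discriminant-type resultant in the parameters and show that the semialgebraic set where two equilibria are both attracting is empty, using a positivity certificate (all coefficients positive after a substitution $x=u+$ shift).

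Finally, completeness needs a check that the 681 representatives have been correctly counted and assigned to these lemmas, using the counts $1,10,100,570$. Stability must also be invariant under species exchange, which holds by the rescaling by $\rho$ described before the statement. Both the bookkeeping and all certificates would be verified by the independent rational-arithmetic implementation.
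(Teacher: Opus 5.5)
Your overall architecture (exact rational witnesses, a few certificate classes, rational-arithmetic checking) is the paper's. However, two ingredients that the exclusions rest on are missing or would fail.

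\textbf{The bridge from counts and signs to ``no two sinks''.} You assert that $\det DF$ alternates along ordered equilibria. That needs the $g$-nullcline to be a graph and $g_y$ to have a fixed sign at equilibria, and you never show either. The paper gets both from the equations. At a positive equilibrium, $f_x=-(a+By+Ey^2)/x-Cx<0$ and $g_y=-(b+Lx+Hx^2)/y-Qy<0$. Hence the positive $g$-nullcline is a graph $y=\phi(x)$, and each coordinate projection is injective on equilibria. Without injectivity, a root bound for an eliminant does not bound the number of equilibria, so your resultant classes are unjustified. With $h(x)=f(x,\phi(x))$ one has $h'=\det DF/g_y$. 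So a sink is a simple decreasing zero, and two sinks force a third zero. That middle zero may be degenerate, so alternation is false in general. This matters for your flux-cone class. A one-signed certificate gives only $\det DF\ge0$, which does not exclude two sinks at a fixed parameter. You need either a strictness argument or the paper's perturbation. Replacing $a$ by $a+\delta$ shifts $h$ by $\delta$ and keeps both sinks. Sard's theorem then gives a genuine saddle at a nearby parameter, contradicting $\det DF\ge0$ on the whole family. Where you do have strict positivity, degree theory and its unexhibited trapping region are unnecessary. Every zero of $h$ is then simple and decreasing, so there is at most one.

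\textbf{The flux-cone step cannot succeed as stated for every network that needs it.} The relevant identity is $DF\diag(x,y)=S\diag(v)A^T-\diag(U)$ with $U=(a,b)^T+Sv=(\rho x,y)^T$. It makes the determinant a quadratic form $w^THw$ in $w=(a,b,v)$ on the cone $\{w\ge0:U\ge0\}$, with $\rho$ eliminated. Extreme rays of a pointed cone are fixed up to positive scaling, so the sign pattern of $R_*^THR_*$ is intrinsic. For N0391, N0646 and N0610 it has negative off-diagonal entries; for example, one reduced form is $u^2-2uv+4v^2$. So ``all coefficients of one sign'' is unattainable. What closes these cases is a copositivity certificate $R_*^THR_*=N+\sum_\ell c_\ell q_\ell q_\ell^T$ with $N\ge0$ entrywise and $c_\ell>0$. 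Your CAD-type fallback is not such a certificate. The ``$2X\to3X$'' obstacle does not arise, since every complex here has molecularity at most two.

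\textbf{Smaller points.} In the witnesses, a positive root of an eliminant need not be a positive equilibrium. You must reconstruct $y$, e.g.\ as $y=N(t)/D(t)$ with both residual numerators divisible by $P$, and certify $D\ne0$ and $y>0$. For parametric Descartes bounds you need a fixed strict coefficient to exclude identically vanishing specializations.
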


\begin{theorem}\label{thm:minimal}
Three internal directed reactions are necessary and sufficient. All 27
positive representatives are deletion-minimal in both models. Within this
universe, bistability is equivalent to containing a listed mechanism up to
species exchange. Adding reactions at sufficiently small positive rates
preserves each positive witness under its removal constraint.
\end{theorem}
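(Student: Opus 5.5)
We will derive Theorem~\ref{thm:minimal} from Theorem~\ref{thm:main}, together with two structural facts about how E and F behave under deletion and small addition of edges. The four claims are handled in order: necessity and sufficiency of three reactions, minimality, the containment characterization, and robustness under small additions.

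\emph{Necessity and sufficiency.} By the size counts $1,10,100,570$ and Theorem~\ref{thm:main}, it suffices to check that every one of the 27 listed representatives has exactly three edges. This can be read off Table~\ref{tab:mechanisms}. Sufficiency then follows from the witnesses in Table~\ref{tab:parameters}. Necessity follows because every representative with at most two edges lies among the 654 excluded ones, and is therefore excluded for all positive F parameters. Since $\mathcal B_E\subseteq\mathcal B_F$, it is excluded in E as well.

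\emph{Minimality and the containment characterization.} Let $R$ be a listed network. Deleting any edge gives a network $R'$ with at most two edges. Up to species exchange, $R'$ is one of the 111 small representatives, all of which lie outside $\mathcal B_F\supseteq\mathcal B_E$. So $R$ is deletion-minimal in both models. Here we use that species exchange, followed by division by $\rho$ in F, preserves bistability, as noted before Theorem~\ref{thm:main}.

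For the equivalence, note that every network in the universe has at most three edges. The only listed mechanism such a network can contain is therefore one with the same number of edges, namely itself, or its image under species exchange. Hence ``containing a listed mechanism up to exchange'' is the same as ``being, up to exchange, one of the 27''. By Theorem~\ref{thm:main} this is exactly membership in $\mathcal B_E=\mathcal B_F$.

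\emph{Robustness under small additions.} This is the only genuinely analytic step, and I expect the main care to be needed here. Fix a witness $(a,b,k)$ for $R$ in E, or extend it with $\rho=1$ for F. Add edges $S$ with rates $\varepsilon k'$. The vector field becomes
\[
F_\varepsilon=F_0+\varepsilon H_S(z;k').
\]
The perturbation is polynomial, hence $C^1$-small on a compact neighbourhood of the two equilibria $z_1,z_2$. Each $z_i$ satisfies $\det DF_0(z_i)>0$, so $DF_0(z_i)$ is invertible. The implicit function theorem then gives smooth branches $z_i(\varepsilon)$ with $F_\varepsilon(z_i(\varepsilon))=0$.

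For small $\varepsilon$ these branches stay in $\Rp^2$ and remain distinct, since $z_1\ne z_2$. The strict inequalities $\tr DF<0$ and $\det DF>0$ persist by continuity. The removal constraint is respected because only internal edges are added: the diagonal term $-z$ (respectively $-\diag(\rho,1)z$) is unchanged.

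The enlarged network may leave the three-edge universe, so this statement is interpreted for arbitrary finite edge additions on $\mathcal C$. The argument above does not depend on the number of added edges, so it covers that case as well.
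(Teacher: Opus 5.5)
Your proposal is correct and follows the paper's proof essentially step for step. Necessity and deletion-minimality come from the fact that every representative with at most two internal edges is among the 654 exclusions of Theorem~\ref{thm:main}, and robustness comes from the implicit function theorem at the two nondegenerate sinks, with the strict Hurwitz inequalities and the unchanged removal terms persisting. You also write out the containment equivalence, which the paper leaves implicit: within a universe of at most three edges, containing a listed three-edge mechanism up to exchange simply means being that mechanism.
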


Each positive representative has one unary reaction and two quadratic-source
reactions: seventeen have $X\to X+Y$ and ten have $X\to2Y$ in canonical
order.
\input{paper/generated/mechanisms.tex}

The negative cases divide among four exclusion rules:
\begin{center}\input{paper/generated/coverage.tex}\end{center}

Fully open one- and two-non-flow-reaction networks have established
multistationarity classifications \cite{joshi1,joshishiu}. Our target
requires two sinks. Four-total-reaction classifications and small-switch
motifs \cite{bbh,wilhelm,panteatalk} overlap the constructions here, while
our positive cases have seven total reactions. The proof uses symbolic
stability and elimination ideas \cite{torresfeliu}, alongside the recent
equilibrium and cusp frameworks \cite{banajifeliu,banajidual,lixiviant}.
The supplied N0549 mechanism also occurs in template 02 of
\cite{lixiviant}, up to species exchange. SWITCHES \cite{switches} supplies
sampled examples in a different reaction alphabet. The result here decides
the entire fixed family under both removal constraints.

\section{Exclusion proofs}
Every allowed field has the form
\begin{align}
 f&=a+A x+B y-Cx^2+Dxy+Ey^2,\label{eq:f}\\
 g&=b+L x+G y+Hx^2+Ixy-Qy^2,\label{eq:g}
\end{align}
where $B,C,E,L,H,Q\ge0$. These signs follow from nonnegative production
of an absent species and the product molecularity bound.

\begin{lemma}\label{lem:geometry}
At every positive equilibrium, $f_x,g_y<0$. Each coordinate projection is
injective on the positive equilibrium set. Two hyperbolic sinks require
at least three positive equilibria and imply a positive saddle at an
arbitrarily nearby positive parameter vector.
\end{lemma}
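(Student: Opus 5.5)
We argue directly from the normal form \eqref{eq:f}--\eqref{eq:g}, using only the sign pattern $B,C,E,L,H,Q\ge 0$ and $a,b>0$. For the first claim we eliminate the $x$-dependent terms of $f$ with the equilibrium equation. At a positive equilibrium, $f=0$ gives $Ax-Cx^2+Dxy=-(a+By+Ey^2)$. Hence
\[
x f_x = Ax-2Cx^2+Dxy=-(a+By+Ey^2)-Cx^2<0 .
\]
The same computation with $g=0$ gives $y g_y=-(b+Lx+Hx^2)-Qy^2<0$. So $f_x<0$ and $g_y<0$ at every positive equilibrium. In fact the same identity shows $f_x<0$ at every positive zero of $f$, whether or not $g$ vanishes there. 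This stronger form is what we use next.

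For injectivity we fix $y>0$ and view $f(\cdot,y)$ as a function of $x\in(0,\infty)$. Every positive zero is a transversal down-crossing. A continuous function cannot have two consecutive zeros that are both down-crossings without a zero in between that is not a down-crossing. Hence $f(\cdot,y)$ has at most one positive zero, and each horizontal line meets the positive $f$-nullcline at most once. The projection of the equilibrium set to $y$ is therefore injective. Symmetrically, $g(x,\cdot)$ shows the projection to $x$ is injective.

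Next we reduce to one dimension. Let $I$ be the set of $y>0$ for which $f(\cdot,y)$ has a positive zero $\varphi(y)$. We will show $I$ is an interval. If $C>0$, then $f(0,y)>0$ and $f\to-\infty$ as $x\to\infty$, so $I=(0,\infty)$. If $C=0$, then $f$ is affine in $x$, namely $f=a+By+Ey^2+x(A+Dy)$. A positive zero exists iff $A+Dy<0$, and this set is an interval because the condition is linear in $y$. By the implicit function theorem and $f_x\neq0$, $\varphi$ is smooth on $I$. Equilibria correspond bijectively to zeros of $h(y)=g(\varphi(y),y)$ on $I$. Using $\varphi'=-f_y/f_x$, we get
\[
h'(y)=\frac{f_xg_y-f_yg_x}{f_x}=\frac{\det DF}{f_x}.
\]
So at an equilibrium, $\operatorname{sign} h'=-\operatorname{sign}\det DF$. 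Two hyperbolic sinks at $y_1<y_2$ have $\det DF>0$, hence $h'(y_1),h'(y_2)<0$. Then $h<0$ just right of $y_1$ and $h>0$ just left of $y_2$. Because $[y_1,y_2]\subseteq I$, the intermediate value theorem gives a third zero in between, and hence a third positive equilibrium.

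For the saddle we perturb the inflow $b\mapsto b+\varepsilon$. This leaves $\varphi$ unchanged and replaces $h$ by $h+\varepsilon$. Since $h$ is analytic and not constant on $I$, Sard's theorem, or finiteness of critical values on a compact subinterval, gives arbitrarily small $\varepsilon$ for which all zeros of $h+\varepsilon$ in $[y_1-\delta,y_2+\delta]$ are simple. Hyperbolic sinks persist under small perturbation, so the perturbed sinks remain near $y_1$ and $y_2$. The perturbed $h$ is still negative just right of the first sink and positive just left of the second. It therefore has a simple up-crossing zero in between, where $h'>0$. There $\det DF<0$, which is a positive saddle.

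The main obstacle is ensuring that $[y_1,y_2]$ lies in the domain of $\varphi$, so that the intermediate value argument applies. This is why we check separately that $I$ is an interval in the cases $C>0$ and $C=0$.
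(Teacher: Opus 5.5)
Your proof is correct and follows the paper's argument with the species roles exchanged: the paper writes the $g$-nullcline as a graph $y=\phi(x)$, sets $h(x)=f(x,\phi(x))$ with $h'=\det DF/g_y$, and perturbs $a$, whereas you parametrize the $f$-nullcline over $y$, use $h'=\det DF/f_x$, and perturb $b$. The one small variation is that you get at most one nullcline point on each line from the ``every zero is a down-crossing'' argument, rather than from the paper's direct root count for the quadratic; both are valid, and your case check that $I$ is an interval matches the paper's $G+Ix<0$ observation.
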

\begin{proof}
At an equilibrium,
\[
 f_x=-\frac{a+By+Ey^2}{x}-Cx<0,\qquad
 g_y=-\frac{b+Lx+Hx^2}{y}-Qy<0.
\]
For fixed $x>0$, the constant term of $g$ in $y$ is positive. If $Q>0$,
there is exactly one positive root. If $Q=0$, there is one precisely
where $G+Ix<0$, an interval. Hence the positive $g$-nullcline is a smooth
graph $y=\phi(x)$ over an interval. Exchanging species proves the other
projection assertion. Set $h(x)=f(x,\phi(x))$. At a zero,
$h'=\det DF/g_y$, so a sink is a simple decreasing zero. Between two
such zeros, $h$ changes from negative to positive and has another zero.
Replacing $a$ by $a+\delta$ replaces $h$ by $h+\delta$ and preserves
both sinks for small $\delta$. Choose a regular value $-\delta$ on a
compact interval between them. The intermediate upward crossing is then
simple and has $\det DF<0$. Such values occur arbitrarily close to zero
by the one-dimensional Sard theorem.
\end{proof}

Consequently, a nonnegative determinant at every positive equilibrium
throughout a parameter family excludes bistability, including parameter
choices with degenerate equilibria.

\paragraph{JAC: 245 representatives.}
The expanded F-model determinant has nonnegative coefficients in
$x,y,\rho,k_1,\ldots,k_m$ and contains $\rho$ with coefficient one.
It is strictly positive. A quadratic field satisfies
\[
 F(p)-F(q)=DF((p+q)/2)(p-q),
\]
so two distinct positive zeros would force a singular Jacobian at their
midpoint.

\paragraph{DIV: 216 further representatives.}
Define
\[
 K=\begin{pmatrix}xf_x-f&yf_y\\xg_x&yg_y-g\end{pmatrix}.
\]
The polynomial $\det K$ has nonnegative coefficients and contains $ab$
with coefficient one. At an equilibrium $K=DF\diag(x,y)$, hence
$\det DF>0$. Lemma~\ref{lem:geometry} excludes two sinks.

\paragraph{ROOT: 179 further representatives.}
Eliminate one variable using the nominal-degree Sylvester resultant $P(t)$.
Its coefficients are polynomials in positive parameters. A coefficient
has fixed strict sign when all its nonzero parameter coefficients have
that sign; other signs range independently over $\{-1,0,1\}$.
Each certificate verifies at most two sign variations and at least one
fixed strict coefficient. Thus every positive parameter specialization
gives a nonzero polynomial with at most two positive roots by Descartes'
rule. Projection injectivity and Lemma~\ref{lem:geometry} give the exclusion.
This upper bound includes spurious roots, vanished coefficients and
degree drops. The strict coefficient rules out identically zero
specializations of the fixed Sylvester determinant.

For example, N0056 consists of $X\xrightarrow{k}X+Y$ and
$X+Y\xrightarrow{h}2X$. Its equilibrium equations reduce to
\[
 y=\frac{b+kx}{1+hx},\qquad
 a+(ah-\rho+hb)x+h(k-\rho)x^2=0.
\]
The constant coefficient $a>0$ makes the quadratic bound valid also at
$k=\rho$. This known multistationary motif therefore has insufficient
positive equilibria for two sinks.

\paragraph{FLUX: the last 14 negative representatives.}
These are N0350, N0354, N0355, N0361, N0369, N0391, N0600, N0610,
N0638, N0646, N0648, N0649, N0661 and N0673.
Let $S$ and $A$ be the internal stoichiometric and source matrices,
with species as rows, and put $v_j=k_jx^{\alpha_{j1}}y^{\alpha_{j2}}$.
At an equilibrium,
\[
 U=(a,b)^T+Sv=(\rho x,y)^T>0,\qquad
 DF\diag(x,y)=S\diag(v)A^T-\diag(U)=:M(w),
\]
where $w=(a,b,v)^T$. Write $\det M(w)=w^THw$.
All admissible equilibria lie in the cone
\[
 \mathcal K=\{w\ge0:U_X\ge0,\ U_Y\ge0\}.
\]
It is pointed and full dimensional; take $v_j=1$ and sufficiently large
$a,b$ for an interior point. The section $\sum_iw_i=1$ is a compact
polytope, so its extreme rays generate $\mathcal K$. For $m$ internal
edges, each ray is obtained from $m+1$ independent active rows among the
$m+4$ inequalities. Exhausting these subsets gives the primitive integer
ray matrix $R_*$. The certificates verify
\begin{equation}\label{eq:sos}
 R_*^THR_*=N+\sum_\ell c_\ell q_\ell q_\ell^T,
 \qquad N_{ij}\ge0,\quad c_\ell>0.
\end{equation}
Hence $w=R_*t$, $t\ge0$, gives $\det M(w)\ge0$ and the required
family-wide saddle exclusion. Eleven matrices are entrywise nonnegative.
For the remaining three, discard coordinates with entrywise nonnegative
rows. The reduced forms are
\begin{align*}
 \mathrm{N0391}:\quad u^2-2uv+4v^2&=(u-v)^2+3v^2,\\
 \mathrm{N0646}:\quad u^2-uv+v^2&=(u-v/2)^2+3v^2/4,\\
 \mathrm{N0610}:\quad u^2+3uv+2v^2-uw-vw+w^2
  &=\bigl(w-(u+v)/2\bigr)^2\\[-3pt]
  &\quad+3u^2/4+5uv/2+7v^2/4.
\end{align*}
The records give every ray and the full identity \eqref{eq:sos}.

\section{Existence certificates and minimality}
For each listed network, a certificate supplies positive rational
parameters with $\rho=1$, a square-free $P\in\mathbb Q[t]$, and
$x=t$, $y=N(t)/D(t)$. Both substituted equilibrium residual numerators
are divisible by $P$. Three disjoint rational intervals isolate positive
roots; singleton intervals specify simple rational roots. Sturm sequences
verify isolation. Rational interval evaluation verifies $D\ne0$, $y>0$,
negative trace and determinant signs $+,-,+$. Thus the same parameter
vector has two positive hyperbolic sinks.

\begin{proof}[Proof of Theorem~\ref{thm:main}]
The four rules exclude $245+216+179+14=654$ representatives in F.
The other 27 have exact E witnesses and therefore F witnesses.
These disjoint sets exhaust the independently enumerated catalogue.
\end{proof}

\begin{proof}[Proof of Theorem~\ref{thm:minimal}]
All networks with at most two internal edges are negative; every positive
has three. Each deletion is therefore negative. Assign each added edge
rate $\varepsilon c_j$ with $c_j>0$. At $\varepsilon=0$, both sink
Jacobians are nonsingular. The implicit function theorem continues the
distinct positive sinks for small $\varepsilon>0$, preserving their strict
Hurwitz inequalities and the removal constraint. This is the usual
reaction-addition inheritance operation \cite{banajipantea}.
\end{proof}

The flux matrix $M(w)$ is independent of the removal ratio. In fact a
single F equilibrium can be reconstructed in E at $z'=U$, with
$k'_j=v_j/(z')^{\alpha_j}$, preserving $DF\diag(z)$ and its determinant
sign. The rates depend on that equilibrium; the two-sink comparison is
established by the simultaneous witnesses above.

\section{An equal-removal switch}
The supplied baseline N0549, before canonical species exchange, is
\[
 Y\xrightarrow{10}2X,\qquad 2X\xrightarrow{50/9}X+Y,\qquad
 X+Y\xrightarrow{10}Y,
\]
with $a=4/99$, $b=1/90$ and removals $(1,1)$. Its equations give
\[
 y=\frac{1/90+(50/9)x^2}{11},\qquad
 11f=-\frac{500}{9}(x-1/10)(x-1/5)(x-3/5).
\]
The three equilibria have $x=1/10,1/5,3/5$ and respectively
$\det DF=25/9,-20/9,100/9$. Their trace is
$-12-(100/9)x-10y<0$, proving sink--saddle--sink order.

Varying $a>0$ while fixing the other rates yields
\[
 P_a(x)=-\frac{500}{9}x^3+50x^2-\frac{100}{9}x+11a+\frac29.
\]
The critical points are $x_\pm=(9\pm\sqrt{21})/30$. The positive fold
occurs at $x_-$ and
\[
 a_+=\frac1{99}+\frac{7\sqrt{21}}{891}\simeq0.0461033;
\]
the other fold value is negative. For $0<a<a_+$, $P_a(0)>0$ and the
two extrema have opposite signs, giving three positive simple roots with
$\det DF=-P'_a(x)$. For $a>a_+$, one positive
sink remains. At the fold, $P''_a(x_-)\ne0$ and $\partial_aP_a=11$,
verifying a nondegenerate saddle-node (Figure~\ref{fig:switch}).
\begin{figure}[htbp]\centering
\includegraphics[width=\textwidth]{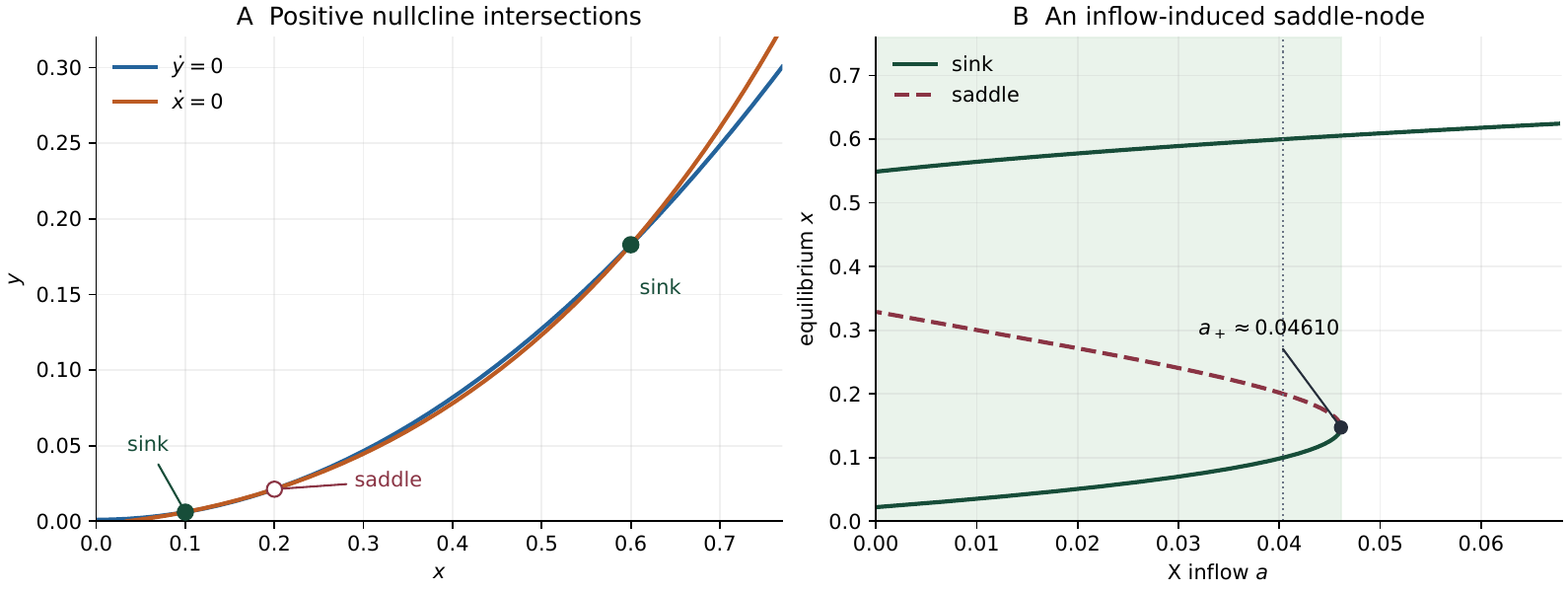}
\caption{N0549 in the displayed species order. Left: nullclines at
$a=4/99$ with two positive sinks and a saddle. Right: equilibrium branches
as $a$ varies; solid branches are sinks and the dashed branch is a saddle.
The shaded interval $0<a<a_+$ is bistable.}\label{fig:switch}
\end{figure}

\section{Computation and data}
The supplementary files contain all 1362 labels, 27 positive certificates
and 654 exclusion certificates. The checker reconstructs fields from
directed reactions using Python's standard-library rational arithmetic.
It verifies the sparse determinant identities, fixed Sylvester
determinants, complete flux-ray lists, square decompositions, Sturm
intervals and Hurwitz signs. Run \texttt{python verify\_all.py} to check
the classification, deletion minimality and the baseline fold identities.
The full rebuild command is \texttt{.\textbackslash reproduce.ps1 -Full};
dependencies, certificate formats and output paths are described in the
accompanying README. The catalogue and exact witness parameters are in
\path{data/catalogue.json} and \path{data/positive_witnesses.csv}.

\paragraph{Computational assistance.}
OpenAI Codex assisted with symbolic derivations, verification code and
manuscript preparation.

\appendix
\section{Rational parameters}
\begingroup\small
\input{paper/generated/parameters.tex}
\endgroup

\end{document}

%% file: paper/generated/mechanisms.tex
\begin{longtable}{@{}l l l l@{}}
\caption{The 27 deletion-minimal mechanisms, up to species exchange. Four positive flow reactions are implicit. Rates follow the displayed edge order.}\label{tab:mechanisms}\\
\toprule ID & Reaction 1 & Reaction 2 & Reaction 3 \\ \midrule\endfirsthead
\toprule ID & Reaction 1 & Reaction 2 & Reaction 3 \\ \midrule\endhead
\bottomrule\endfoot
N0438 & $X\to X{+}Y$ & $2X\to X$ & $X{+}Y\to 2X$ \\
N0440 & $X\to X{+}Y$ & $2X\to X$ & $2Y\to X$ \\
N0442 & $X\to X{+}Y$ & $2X\to X$ & $2Y\to 2X$ \\
N0443 & $X\to X{+}Y$ & $2X\to X$ & $2Y\to X{+}Y$ \\
N0448 & $X\to X{+}Y$ & $2X\to Y$ & $X{+}Y\to 2X$ \\
N0450 & $X\to X{+}Y$ & $2X\to Y$ & $2Y\to X$ \\
N0452 & $X\to X{+}Y$ & $2X\to Y$ & $2Y\to 2X$ \\
N0453 & $X\to X{+}Y$ & $2X\to Y$ & $2Y\to X{+}Y$ \\
N0459 & $X\to X{+}Y$ & $2X\to X{+}Y$ & $2Y\to X$ \\
N0467 & $X\to X{+}Y$ & $2X\to 2Y$ & $2Y\to X$ \\
N0474 & $X\to X{+}Y$ & $X{+}Y\to X$ & $2Y\to X$ \\
N0476 & $X\to X{+}Y$ & $X{+}Y\to X$ & $2Y\to 2X$ \\
N0477 & $X\to X{+}Y$ & $X{+}Y\to X$ & $2Y\to X{+}Y$ \\
N0480 & $X\to X{+}Y$ & $X{+}Y\to Y$ & $2Y\to X$ \\
N0482 & $X\to X{+}Y$ & $X{+}Y\to Y$ & $2Y\to 2X$ \\
N0483 & $X\to X{+}Y$ & $X{+}Y\to Y$ & $2Y\to X{+}Y$ \\
N0489 & $X\to X{+}Y$ & $X{+}Y\to 2Y$ & $2Y\to X$ \\
N0510 & $X\to 2Y$ & $2X\to X$ & $X{+}Y\to 2X$ \\
N0514 & $X\to 2Y$ & $2X\to X$ & $2Y\to 2X$ \\
N0515 & $X\to 2Y$ & $2X\to X$ & $2Y\to X{+}Y$ \\
N0520 & $X\to 2Y$ & $2X\to Y$ & $X{+}Y\to 2X$ \\
N0524 & $X\to 2Y$ & $2X\to Y$ & $2Y\to 2X$ \\
N0525 & $X\to 2Y$ & $2X\to Y$ & $2Y\to X{+}Y$ \\
N0548 & $X\to 2Y$ & $X{+}Y\to X$ & $2Y\to 2X$ \\
N0549 & $X\to 2Y$ & $X{+}Y\to X$ & $2Y\to X{+}Y$ \\
N0554 & $X\to 2Y$ & $X{+}Y\to Y$ & $2Y\to 2X$ \\
N0555 & $X\to 2Y$ & $X{+}Y\to Y$ & $2Y\to X{+}Y$ \\
\end{longtable}

%% file: paper/generated/coverage.tex
\begin{tabular}{@{}rrrrrrr@{}}
\toprule Internal edges & JAC & DIV & ROOT & FLUX & YES & Total \\ \midrule
0 & 1 & 0 & 0 & 0 & 0 & 1 \\
1 & 8 & 2 & 0 & 0 & 0 & 10 \\
2 & 54 & 33 & 13 & 0 & 0 & 100 \\
3 & 182 & 181 & 166 & 14 & 27 & 570 \\
\midrule Total & 245 & 216 & 179 & 14 & 27 & 681 \\
\bottomrule\end{tabular}

%% file: paper/generated/parameters.tex
\begin{longtable}{@{}lccccc@{}}
\caption{Rational witnesses for Table~\ref{tab:mechanisms}, with removals $(1,1)$ and rates in edge order. JSON certificates give exact equilibria and stability bounds.}\label{tab:parameters}\\
\toprule ID & $a$ & $b$ & $k_1$ & $k_2$ & $k_3$ \\ \midrule\endfirsthead
\toprule ID & $a$ & $b$ & $k_1$ & $k_2$ & $k_3$ \\ \midrule\endhead
\bottomrule\endfoot
N0438 & $\frac{1}{50}$ & $\frac{1}{50}$ & $\frac{401}{202}$ & $\frac{25}{10201}$ & $\frac{5025}{10201}$ \\[4pt]
N0440 & $\frac{1}{50}$ & $\frac{3}{100}$ & $\frac{601}{202}$ & $\frac{25}{10201}$ & $\frac{5025}{10201}$ \\[4pt]
N0442 & $\frac{1}{50}$ & $\frac{3}{100}$ & $\frac{43}{29}$ & $\frac{100}{41209}$ & $\frac{2525}{2601}$ \\[4pt]
N0443 & $\frac{1}{50}$ & $\frac{1}{50}$ & $\frac{401}{202}$ & $\frac{25}{10201}$ & $\frac{5025}{10201}$ \\[4pt]
N0448 & $\frac{3}{100}$ & $\frac{1}{50}$ & $\frac{401}{202}$ & $\frac{25}{10201}$ & $\frac{10050}{20503}$ \\[4pt]
N0450 & $\frac{3}{100}$ & $\frac{3}{100}$ & $\frac{601}{202}$ & $\frac{25}{10201}$ & $\frac{20100}{41209}$ \\[4pt]
N0452 & $\frac{3}{100}$ & $\frac{3}{100}$ & $\frac{43}{29}$ & $\frac{100}{41209}$ & $\frac{10100}{10609}$ \\[4pt]
N0453 & $\frac{3}{100}$ & $\frac{1}{50}$ & $\frac{401}{202}$ & $\frac{25}{10201}$ & $\frac{20100}{41209}$ \\[4pt]
N0459 & $\frac{1}{50}$ & $\frac{3}{100}$ & $\frac{601}{202}$ & $\frac{25}{10201}$ & $\frac{20100}{41209}$ \\[4pt]
N0467 & $\frac{3}{100}$ & $\frac{3}{100}$ & $\frac{601}{202}$ & $\frac{25}{10201}$ & $\frac{1675}{3468}$ \\[4pt]
N0474 & $\frac{1}{100}$ & $\frac{1}{25}$ & $\frac{601}{202}$ & $\frac{25}{10201}$ & $\frac{5025}{10201}$ \\[4pt]
N0476 & $\frac{1}{100}$ & $\frac{1}{25}$ & $\frac{43}{29}$ & $\frac{50}{10353}$ & $\frac{2525}{2601}$ \\[4pt]
N0477 & $\frac{1}{100}$ & $\frac{3}{100}$ & $\frac{401}{202}$ & $\frac{25}{10201}$ & $\frac{5025}{10201}$ \\[4pt]
N0480 & $\frac{1}{50}$ & $\frac{3}{100}$ & $\frac{601}{202}$ & $\frac{25}{10201}$ & $\frac{5025}{10201}$ \\[4pt]
N0482 & $\frac{1}{50}$ & $\frac{3}{100}$ & $\frac{43}{29}$ & $\frac{50}{10353}$ & $\frac{2525}{2601}$ \\[4pt]
N0483 & $\frac{1}{50}$ & $\frac{1}{50}$ & $\frac{401}{202}$ & $\frac{25}{10201}$ & $\frac{5025}{10201}$ \\[4pt]
N0489 & $\frac{1}{50}$ & $\frac{3}{100}$ & $\frac{601}{202}$ & $\frac{50}{20503}$ & $\frac{20100}{41209}$ \\[4pt]
N0510 & $\frac{3}{100}$ & $\frac{1}{50}$ & $\frac{101}{2}$ & $25$ & $\frac{5050}{103}$ \\[4pt]
N0514 & $\frac{3}{100}$ & $\frac{3}{100}$ & $17$ & $\frac{100}{9}$ & $\frac{2600}{2809}$ \\[4pt]
N0515 & $\frac{3}{100}$ & $\frac{1}{50}$ & $\frac{101}{2}$ & $25$ & $\frac{10100}{10609}$ \\[4pt]
N0520 & $\frac{1}{25}$ & $\frac{1}{50}$ & $\frac{101}{2}$ & $25$ & $\frac{2525}{52}$ \\[4pt]
N0524 & $\frac{1}{25}$ & $\frac{3}{100}$ & $17$ & $\frac{100}{9}$ & $\frac{650}{729}$ \\[4pt]
N0525 & $\frac{1}{25}$ & $\frac{1}{50}$ & $\frac{101}{2}$ & $25$ & $\frac{2525}{2704}$ \\[4pt]
N0548 & $\frac{1}{50}$ & $\frac{1}{25}$ & $17$ & $\frac{100}{159}$ & $\frac{2600}{2809}$ \\[4pt]
N0549 & $\frac{1}{50}$ & $\frac{3}{100}$ & $\frac{101}{2}$ & $\frac{50}{103}$ & $\frac{10100}{10609}$ \\[4pt]
N0554 & $\frac{3}{100}$ & $\frac{3}{100}$ & $17$ & $\frac{100}{159}$ & $\frac{2600}{2809}$ \\[4pt]
N0555 & $\frac{3}{100}$ & $\frac{1}{50}$ & $\frac{101}{2}$ & $\frac{50}{103}$ & $\frac{10100}{10609}$ \\[4pt]
\end{longtable}